\def\bdi{\begin{diagram}}
\def\edi{\end{diagram}}
\newtheorem{thm}{Theorem}[section]
\newtheorem{cor}[thm]{Corollary}
\newtheorem{lem}[thm]{Lemma}
\newtheorem{prop}[thm]{Proposition}
\theoremstyle{definition}
\newtheorem{defi}[thm]{Definition}
\newtheorem{defis}[thm]{Definitions}
\newtheorem{conj}[thm]{Conjecture}
\newtheorem{conv}[thm]{Convention}
\newtheorem{nota}[thm]{Notation}
\newtheorem{rem}[thm]{Remark}
\newtheorem{rems}[thm]{Remarks}
\newtheorem{exa}[thm]{Example}
\newtheorem{exas}[thm]{Examples}
\newcommand{\rien}[1]{}
\newcommand{\N}{\ensuremath{\mathbb{N}}}
\newcommand{\proj}{\ensuremath{\mathbb{P}}}
\newcommand{\cO}{{\ensuremath{\mathcal{O}}}}
\def\PP{{\mathbb P}}
\def\codim{\mathop{\rm codim}}
\renewcommand{\epsilon}{\varepsilon}
\renewcommand{\phi}{\varphi}
\newcommand{\bnum}{\begin{enumerate}}
\newcommand{\enum}{\end{enumerate}}
\renewcommand{\emptyset}{\varnothing}
\newcommand{\brem}{\begin{rem}}
\newcommand{\brems}{\begin{rems}}
\newcommand{\erem}{\end{rem}}
\newcommand{\erems}{\end{rems}}
\newcommand{\bexa}{\begin{exa}}
\newcommand{\bexas}{\begin{exas}}
\newcommand{\eexa}{\end{exa}}
\newcommand{\eexas}{\end{exas}}
\newcommand{\bdefi}{\begin{defi}}
\newcommand{\edefi}{\end{defi}}
\newcommand{\bdefis}{\begin{defis}}
\newcommand{\edefis}{\end{defis}}
\newcommand{\bcor}{\begin{cor}}
\newcommand{\ecor}{\end{cor}}
\newcommand{\blem}{\begin{lem}}
\newcommand{\elem}{\end{lem}}
\newcommand{\bconv}{\begin{conv}}
\newcommand{\econv}{\end{conv}}
\newcommand{\bconj}{\begin{conj}}
\newcommand{\econj}{\end{conj}}
\newcommand{\bprop}{\begin{prop}}
\newcommand{\eprop}{\end{prop}}
\newcommand{\bthm}{\begin{thm}}
\newcommand{\ethm}{\end{thm}}
\newcommand{\bnota}{\begin{nota}}
\newcommand{\enota}{\end{nota}}
\newcommand{\bsit}{\begin{sit}}
\newcommand{\esit}{\end{sit}}
\newcommand{\be}{\begin{eqnarray}}
\newcommand{\ee}{\end{eqnarray}}
\newcommand{\bproof}{\begin{proof}}
\newcommand{\eproof}{\end{proof}}
\def\ba{\begin{array}}
\def\ea{\end{array}}
\title[Gaps for geometric genera]{
Gaps for geometric genera}
\author{C.~Ciliberto, F.~Flamini, M.~Zaidenberg}
\address{Dipartimento di Matematica, Universit\`a degli
Studi di Roma ``Tor Vergata'', Via della Ricerca Scientifica,
00133 Roma, Italy} \email{cilibert@mat.uniroma2.it,
flamini@mat.uniroma2.it}
\address{Universit\'e Grenoble I, Institut Fourier, UMR 5582
CNRS-UJF, BP 74, 38402 Saint Martin d'H\`eres c\'edex, France}
\email{mikhail.zaidenberg@ujf-grenoble.fr}
\thanks{{\bf Acknowledgments:} The first and second authors have been supported by the
Italian MIUR Project protocol 2010S47ARA\_005 and by GNSAGA of
INdAM. The third author was supported by the French-Italian cooperation project GRIFGA and by INdAM. The authors thank all Institutions which helped them in this collaboration, including their own Departments.
}
\thanks{
{\it 2010 Mathematics Subject Classification}:
14N25, 14J70, 14C20, 14J29, 32Q45.\; {\it Key words}:  projective hypersurface, geometric genus}
\begin{document}

\begin{abstract} We investigate the possible values for geometric genera of subvarieties in a smooth projective variety. Values which are not attained are called \emph{gaps}.
For curves on a very general surface in $\PP^3$, the initial gap interval was found by Xu (see \cite{Xu1}), and the next one in our previous paper \cite{CFZ}, where also the finiteness of the set of gaps was established and an asymptotic upper bound of this set was found.  In  the present paper we extend some of these results to smooth projective varieties of arbitrary dimension using a different approach.
\end{abstract}

\maketitle


\vfuzz=2pt
\thanks{}

\noindent


\section*{Introduction} We consider the following problem. Let $X$ be a smooth complex projective variety of dimension $n>1$, with an ample divisor $L$. For each positive integer $s<n$, 
describe the set $\mathcal P_{X,s}$ of geometric genera of irreducible subvarieties $V\subset X$ of dimension $s$, and, in particular, the subset $\mathcal P_{X,L,s}\subseteq \mathcal P_{X,s}$
of geometric genera of irreducible complete intersections of $n-s$ hypersurfaces from $\bigcup_{m\geqslant1} |mL|$. The
complement of each of these sets in $\{0\}\cup\N$ is the corresponding
\emph{set of $s$--gaps}, and its maximal intervals are called \emph{$s$--gap
intervals}. For curves on a very general surface $X$ in $\PP^3$ of degree $d$ (i.e., $n=2$, $s=1$) with a natural polarization $\mathcal O_X(1)$ the two sets $\mathcal P_{X,\mathcal O_X(1),1}$ and $\mathcal P_{X,1}$
coincide; the initial gap interval was found in \cite{Xu1} and the next one in \cite{CFZ}. In this case
there exists a maximum $G_d$  for the set of gaps (\cite[Thm.~2.4]{CFZ} and Remark \ref {rem:compareflam2} below). This means that a very general surface of degree $d$ in $\PP^3$ carries a curve of geometric genus $g$ for any $g> G_d$. In the present note we show that the latter remains true for any smooth projective variety, and in particular, for \emph{any} (not just for a very general) smooth surface  of degree $d$ in  $\PP^3$. One of our main  results is the following:

\begin {thm}\label{thm:main} Let $X$ be an  irreducible, smooth, projective variety of dimension $n>1$, let $L$ be a very ample divisor on $X$ and let $s\in\{1,\ldots,n-1\}$. 
Then there is an  integer $p_{X,L,s}$ (depending on $X$, $L$ and $s$) such that 
for any $p\geqslant p_{X,L,s}$ one can find an irreducible subvariety $Y$ of $X$ of dimension $s$ with at most ordinary points of multiplicity $s+1$ as singularities such that $p_g(Y)=p$. Moreover, one can choose $Y$ to be a complete intersection $Y=D_1\cap\ldots\cap D_{n-s}$, where $D_i\in |L|$ for $i=1,\ldots,n-s-1$ are smooth and transversal and $D_{n-s}\in |mL|$ for some $m\geqslant1$ is such that $Y$ has  ordinary singularities of multiplicity $s+1$. 
\end{thm}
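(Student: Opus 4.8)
The plan is to construct the complete intersection $Y=D_1\cap\cdots\cap D_{n-s}$ in two stages: first cut $X$ down by $n-s-1$ general members of $|L|$ to reduce to an $(s+1)$-dimensional smooth projective variety $W$, and then produce on $W$ a hypersurface section $D\in|mL|_W|$ whose only singularities are a prescribed number $\delta$ of ordinary $(s+1)$-fold points, so that $Y=D$. The point of imposing $\delta$ ordinary points of multiplicity $s+1$ on an $s$-fold is that, by the standard adjunction computation, resolving such a point drops the geometric genus by a fixed positive amount $c(n,s)=\binom{s}{s+1}$-type contribution — more precisely, each ordinary $(s+1)$-fold point on an $s$-dimensional variety contributes a fixed positive number $\kappa=\kappa(s)$ to $p_a-p_g$. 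Hence if $D_m\in|mL|$ is smooth, its genus $p_g=p_a(D_m)=:P(m)$ grows (for $m\gg 0$) like a polynomial in $m$ of degree $s+1$ with positive leading coefficient, and in particular consecutive values satisfy $P(m+1)-P(m)\to\infty$; the gaps between these smooth-complete-intersection genera are exactly what we fill by allowing $\delta$ ordinary singular points, with $\delta$ ranging over $\{0,1,\ldots,\delta_{\max}(m)\}$.

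Concretely, I would fix $m$ large and ask: for which $\delta$ does $|mL|_W|$ contain an irreducible $D$ with exactly $\delta$ assigned ordinary $(s+1)$-fold points and no other singularity? Imposing an ordinary point of multiplicity $s+1$ at a general point $x\in W$ is $\binom{s+1+\dim W-1}{\dim W}=\binom{2s}{s+1}$ linear conditions on $|mL|_W|$ (here $\dim W=s+1$), and for $m\gg0$ these conditions are independent for $\delta$ general points as long as $\delta\binom{2s}{s+1}\leqslant\dim|mL|_W|$, which for fixed $m$ allows $\delta$ up to roughly $c'm^{s+1}$. A Bertini-type argument away from the assigned base points shows the general such $D$ is smooth elsewhere and has ordinary singularities exactly at the $\delta$ points; irreducibility for $m\gg0$ follows because $mL|_W$ is very ample and the linear system imposing the singularities is still very ample off the finite base locus. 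Then $p_g(D)=P(m)-\delta\kappa$, and as $\delta$ runs through $0,\ldots,\delta_{\max}(m)$ we realize every genus in the interval $[P(m)-\delta_{\max}(m)\kappa,\;P(m)]$. The final step is a counting/overlap argument: choose $m_0$ so that for all $m\geqslant m_0$ the interval attained at level $m+1$ overlaps (or abuts) the one attained at level $m$, i.e. $P(m+1)-\delta_{\max}(m+1)\kappa\leqslant P(m)+1$; since $\delta_{\max}(m)\kappa\sim c'' m^{s+1}$ has the same leading order as $P(m)$ while $P(m+1)-P(m)$ has degree $s$, this inequality holds for $m\gg0$. Setting $p_{X,L,s}:=P(m_0)-\delta_{\max}(m_0)\kappa$ (or rather the left endpoint of the first interval, suitably adjusted) then gives every $p\geqslant p_{X,L,s}$ as a geometric genus, completing the proof.

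The main obstacle I expect is the simultaneous control of \emph{irreducibility} and the \emph{nature of the singularities}: one must ensure that the general member of the sublinear system $|mL|_W - (s+1)\sum x_i|$ is irreducible and reduced, has ordinary (i.e. the projectivized tangent cone is a smooth hypersurface in $\PP^s$) singularities of multiplicity exactly $s+1$ at each $x_i$, and is otherwise smooth. For $s=1$ this is classical, but in higher dimension keeping the singular points ordinary — not just multiplicity $s+1$ — requires a separate local argument: one checks that imposing "ordinary point" is an open condition inside the closed condition "multiplicity $\geqslant s+1$", and that this open set is nonempty for $m\gg0$ by exhibiting one explicit local model (e.g. using that $mL$ separates $(s+1)$-jets at $x_i$). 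A secondary subtlety is making the threshold $m_0$ and hence $p_{X,L,s}$ genuinely effective, but for the qualitative statement of Theorem~\ref{thm:main} only the existence of such an $m_0$ is needed, which follows from the degree comparison between $P(m)$ (degree $s$ growth of differences) and $\delta_{\max}(m)\kappa$ (degree $s+1$ growth), so the intervals eventually cover a half-line.
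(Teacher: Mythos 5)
Your overall strategy coincides with the paper's: slice $X$ by $n-s-1$ general members of $|L|$ to get a smooth $(s+1)$-fold $W$, realize the genera between consecutive ``smooth'' values $P(m-1)$ and $P(m)$ by members of $|mL|_W|$ with $\delta$ ordinary points of multiplicity $s+1$ at general points, and conclude by overlapping intervals. But as written there is a genuine logical gap at the central step. You leave the genus drop per ordinary $(s+1)$-fold point as an unspecified constant $\kappa(s)$ (the ``$\binom{s}{s+1}$-type'' formula you gesture at is not meaningful), and then assert that letting $\delta$ range over $\{0,\ldots,\delta_{\max}(m)\}$ realizes \emph{every} genus in $[P(m)-\delta_{\max}(m)\kappa,\,P(m)]$. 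That is false unless $\kappa=1$: for $\kappa\geqslant 2$ you only obtain the arithmetic progression $P(m)-\delta\kappa$ and miss every other residue class mod $\kappa$. The identity $\kappa=1$ is precisely why the multiplicity is taken to be $s+1=\dim W$: on the blow-up $\pi\colon W'\to W$ at such a point one has $\omega_{W'}=\pi^*\omega_W\otimes\cO_{W'}(sE)$ and $Y'=\pi^*Y-(s+1)E$, so $\omega_{W'}\otimes\cO_{W'}(Y')=\pi^*(\omega_W\otimes\cO_W(Y))\otimes\cO_{W'}(-E)$, i.e.\ each ordinary point imposes exactly one linear condition on the adjoint system, and independence of these conditions at general points gives $p_g=p_a-\delta$ exactly (this is Lemma \ref{lem:pg}(ii) of the paper). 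Without carrying out this computation the interval-filling, and hence the theorem, does not follow.

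The second soft spot is the construction itself: for $\delta$ general points you need an \emph{irreducible} $D\in|mL|_W|$ whose singularities are \emph{ordinary} points of multiplicity exactly $s+1$ there and nowhere else. Your route (impose multiplicity $\geqslant s+1$, count conditions, invoke Bertini plus jet separation at each $x_i$) requires the $(s+1)$-jet conditions at all $\delta$ points to be \emph{simultaneously} independent, so that the tangent cone of the general member at each $x_i$ is a general degree-$(s+1)$ form; since $\delta\to\infty$ with $m$, this is a nontrivial interpolation statement that a naive dimension count does not deliver. The paper sidesteps it: writing $m=(s+1)\nu+\mu$, it sets $\Lambda=\nu L\otimes\mathcal I_{x/W}$, uses the generalized trisecant lemma (Lemma \ref{lem:sec}) together with the inequality $\delta\leqslant\dim|\nu L|-\dim W$ to guarantee that the reduced scheme $x$ is the exact base locus of $|\Lambda|$, and then takes the general member of $|\Lambda^{\otimes(s+1)}\otimes\cO_W(\mu L)|$: products of $s+1$ sections whose differentials span the cotangent space at each $x_i$ sweep out all of ${\rm Sym}^{s+1}$ of the cotangent space, so the tangent cone of the general member is a general degree-$(s+1)$ form and the point is ordinary, while only \emph{reduced} general points ever need to impose independent conditions. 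You should either adopt such a device or supply the missing interpolation argument; together with the unverified $\kappa=1$, this is what separates your sketch from a proof.
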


Let $Y$ be an irreducible variety of dimension $s$. A point $y\in Y$ is \emph{ordinary of multiplicity} $m$ ($m>1$), if\\
\begin{inparaenum}
\item [(i)] the Zariski tangent space of $Y$ at $y$ has dimension $s +1$, and\\
\item [(ii)]  the (affine) tangent cone to $Y$ at $y$ is a cone with vertex $y$ over a smooth hypersurface of degree $m$ in $\mathbb P^ {s}$.
\end{inparaenum}

An ordinary point of $Y$ is an isolated hypersurface singularity, hence, it is Gorenstein. 

The proof of Theorem \ref{thm:main} is done in Section \ref{S:nfolds}. In Section \ref{S:surf} we deduce an effective upper bound for gaps in the surface case. In Section \ref{S:surfP3}, we focus on smooth surfaces in $\PP^3$, proving in particular that in this case there is no \emph{absolute gap} for geometric genera of curves. That is, for all $d>0$, all non--negative integers are geometric genera for some curves lying on some smooth surfaces of degree $d$ in $\mathbb P^ 3$.


\subsection* {Notation and conventions} We  work over the field of complex numbers and use standard notation and terminology.
In particular, for $X$ a reduced, irreducible, projective variety,
we denote by $\omega_X$ its dualizing sheaf.  We will sometimes
abuse notation and use the same symbol to denote a divisor $D$ on
$X$ and its class in ${\rm Pic}(X)$. Thus $K_X$ will denote a
canonical divisor or the canonical sheaf $\omega_X$. When $Y \subset X$ is a closed subscheme, 
$\mathcal I_{Y/X}$ will denote its ideal sheaf.


\section{Upper bound for gaps}\label{S:nfolds} 

\subsection{Preliminaries} 
In the sequel, $X$ is an irreducible,  complex projective variety of dimension $n\geqslant 2$.
We assume usually that $X$ is Gorenstein, so that $\omega_X$ is a line bundle. This holds, in particular, if $X$ has only ordinary singularities.
We set 
\[
p(X):=h^ 0(X,\omega_{X})\,\,\,\mbox{and}\,\,\, q(X):=h^ 1(X,\omega_{X})\,.
\]
For smooth varieties, both $p(X)$ and $q(X)$ are birational invariants. Note that, if $X$ is a smooth surface, then $q(X)$ is the \emph{irregularity} of $X$.

The  \emph{geometric genus} of $X$ is defined as 
\[ 
p_g(X):= p(X'),
\]
where $X'\to X$ is any desingularization of $X$. 

\begin{lem}\label{lem:pg} Let $X$ be an  irreducible, smooth projective variety of dimension $n$, and let $Y$ be an  irreducible, effective divisor on $X$.  Assume that $h^ i(X,\omega_X\otimes \mathcal O_X(Y))=0$ for all  $i\geqslant 1$ \footnote{By the Kawamata--Viehweg vanishing theorem this holds provided
$Y$ is nef and big (in particular, for $Y$ ample).}. Then:\\
\begin{inparaenum}
\item [(i)] one has
\[
p(Y)=h^ 0(X,\omega_X\otimes \mathcal O_X(Y))+q(X)-p_g(X)\,,
\]
which is the geometric genus if $Y$ is smooth;\\
\item [(ii)] suppose that  ${\rm Sing}(Y)=\{x_1,\ldots,x_k\}$, where $x_1,\ldots,x_k$ are  ordinary points of $Y$ of multiplicity $n$. Then
\[
p_g(Y)\geqslant p(Y) -k,
\]
and the equality holds if and only if $x_1,\ldots,x_k$ impose $k$ independent conditions to the linear system $|\omega_X\otimes \mathcal O_X(Y)|$, i.e., if and only if the restriction map
\begin{equation}\label{eq:pp}
H^ 0(X,\omega_X\otimes \mathcal O_X(Y))\longrightarrow \bigoplus_{i=1}^ k \mathcal O_{x_i}
\end{equation}
is surjective. 
\end{inparaenum}
\end{lem}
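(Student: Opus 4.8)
The plan is to analyze the two situations via the cohomology of the adjunction sequence. For part (i), since $Y$ is an effective divisor on the smooth variety $X$, one has the short exact sequence $0 \to \omega_X \to \omega_X \otimes \mathcal O_X(Y) \to \omega_Y \to 0$, where the last term is $\omega_Y = \omega_X \otimes \mathcal O_X(Y)|_Y$ by the adjunction formula (valid since $Y$ is Gorenstein, being a divisor in a smooth variety). Passing to the long exact sequence in cohomology and using the vanishing hypothesis $h^i(X, \omega_X \otimes \mathcal O_X(Y)) = 0$ for $i \geqslant 1$, together with the definitions $p(X) = h^0(X,\omega_X)$, $q(X) = h^1(X,\omega_X)$, and $p(Y) = h^0(Y,\omega_Y)$, I would read off $h^0(Y,\omega_Y) = h^0(X,\omega_X\otimes\mathcal O_X(Y)) - h^0(X,\omega_X) + h^1(X,\omega_X)$, which is the asserted formula. (Here I also use $p_g(X) = p(X)$ for $X$ smooth.) If $Y$ is smooth, then $p(Y) = p_g(Y)$ by definition, which gives the final clause of (i).

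For part (ii), let $\nu\colon Y' \to Y$ be the blow-up of $Y$ at the ordinary points $x_1,\ldots,x_k$ of multiplicity $n = \dim Y$; this is a desingularization, so $p_g(Y) = h^0(Y', \omega_{Y'})$. The key local computation is that for an ordinary point of multiplicity $n$ on an $n$-fold, the blow-up has exceptional divisor a smooth degree-$n$ hypersurface in $\mathbb P^n$, and one compares $\omega_{Y'}$ with $\nu^*\omega_Y$: the discrepancy is controlled so that $\nu_*\omega_{Y'}$ is the ideal sheaf of the (reduced) points $\{x_1,\ldots,x_k\}$ twisted by $\omega_Y$ — i.e., a section of $\omega_Y$ lifts to $Y'$ if and only if it vanishes at all the $x_i$. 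This is where I expect the main work to lie: one must verify, via the explicit local model of the cone over a smooth hypersurface of degree $n$ in $\mathbb P^{n-1}$ (wait — degree $n$ in $\mathbb P^{n-1}$, i.e.\ a hypersurface of the same degree as the dimension of the ambient affine cone), that the adjunction/discrepancy computation yields exactly "pass through the point once" as the condition, with no higher-order vanishing imposed; a del Pezzo-type identity for the exceptional hypersurface ($\omega$ trivial or effective) is what makes the genus drop by exactly $1$ per point.

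Granting this, the global argument is bookkeeping: from $0 \to \nu^*(\omega_Y) \otimes \mathcal I \to \nu^*\omega_Y \to (\text{skyscrapers}) \to 0$ — or more cleanly, from the exact sequence on $Y$ relating $\omega_Y$, its subsheaf $\nu_*\omega_{Y'}$, and the quotient $\bigoplus_{i=1}^k \mathcal O_{x_i}$ — I get $p_g(Y) = h^0(Y,\nu_*\omega_{Y'}) \geqslant h^0(Y,\omega_Y) - k = p(Y) - k$, with equality precisely when the connecting/evaluation map $H^0(Y,\omega_Y) \to \bigoplus_{i=1}^k \mathcal O_{x_i}$ is surjective. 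Finally, identifying $H^0(Y,\omega_Y) = H^0(X,\omega_X\otimes\mathcal O_X(Y))$ via part (i)'s exact sequence and the fact that $H^1(X,\omega_X\otimes\mathcal O_X(Y)|_{\text{restricted}})$ terms vanish, this surjectivity is the same as surjectivity of the map \eqref{eq:pp}, namely that $x_1,\ldots,x_k$ impose independent conditions on $|\omega_X\otimes\mathcal O_X(Y)|$. The one point needing care is confirming that higher cohomology does not obstruct the translation between "conditions on $|\omega_Y|$" and "conditions on $|\omega_X\otimes\mathcal O_X(Y)|$," but this follows from the vanishing hypothesis already in force.
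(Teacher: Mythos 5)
Your part (i) is exactly the paper's argument (adjunction sequence plus the vanishing hypothesis), so there is nothing to compare there. For part (ii) you take a genuinely different route: you blow up $Y$ itself at the $x_i$ and compute the adjoint ideal $\nu_*\omega_{Y'}\subseteq\omega_Y$ intrinsically, whereas the paper blows up the ambient variety, $\pi\colon X'\to X$, observes that the proper transform $Y'\in|\pi^*\mathcal O_X(Y)\otimes\mathcal O_{X'}(-nE)|$ is smooth (this is where ordinariness is used), and simply re-applies part (i) on $X'$ using $\omega_{X'}=\pi^*\omega_X\otimes\mathcal O_{X'}((n-1)E)$, so that $\omega_{X'}\otimes\mathcal O_{X'}(Y')=\pi^*(\omega_X\otimes\mathcal O_X(Y))\otimes\mathcal O_{X'}(-E)$ and $h^0$ of this sheaf is $h^0(X,\omega_X\otimes\mathcal O_X(Y)\otimes\mathcal I_{x/X})$. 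The paper's route buys you the key local fact for free: the ``discrepancy'' $(n-1)-n=-1$ is a one-line line-bundle computation on the smooth $X'$, and it is precisely the cleanest way to justify the local claim that you correctly identify but leave as the ``main work'' (your appeal to a ``del Pezzo-type identity'' is a red herring --- the exceptional hypersurface is a degree-$n$ hypersurface in $\mathbb P^{n-1}$, hence has trivial canonical class; what actually matters is only that $m-n+1=1$ when $m=n$, i.e.\ adjoint forms vanish to order exactly one). Your route, on the other hand, has the merit of producing the formula $p_g(Y)=h^0(Y,\omega_Y\otimes\mathcal I_{x/Y})$ unconditionally, since $H^0(Y',\omega_{Y'})=H^0(Y,\nu_*\omega_{Y'})$ needs no vanishing hypothesis.

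There is, however, one genuine gap in your final translation step, and your claim that it ``follows from the vanishing hypothesis already in force'' is not correct. Your computation yields: equality holds iff $H^0(Y,\omega_Y)\to\bigoplus_i\mathcal O_{x_i}$ is surjective. The lemma's criterion is surjectivity of $H^0(X,\omega_X\otimes\mathcal O_X(Y))\to\bigoplus_i\mathcal O_{x_i}$, which factors through the first map via the restriction $H^0(X,\omega_X\otimes\mathcal O_X(Y))\to H^0(Y,\omega_Y)$. So the ``if'' direction is immediate, but for the ``only if'' direction you need the image of that restriction to still surject onto $\bigoplus_i\mathcal O_{x_i}$, and the obstruction to surjectivity of the restriction itself is $H^1(X,\omega_X)$, which has dimension $q(X)$ and is \emph{not} killed by the hypothesis $h^i(X,\omega_X\otimes\mathcal O_X(Y))=0$ (that hypothesis controls $H^1$ of the twisted sheaf, not of $\omega_X$). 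When $q(X)=0$ the two surjectivity conditions coincide and your argument closes; for $q(X)>0$ you must either supply an argument that the $q(X)$-dimensional cokernel does not interfere, or state the equality criterion in terms of $H^0(Y,\omega_Y)$. You should at least flag this rather than assert it follows from the vanishing in force.
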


\begin{proof} Part (i) follows  from the \emph{adjunction sequence}
\[
0\longrightarrow \omega_X \longrightarrow \omega_X\otimes \mathcal O_X(Y)\longrightarrow 
\omega_X\otimes \mathcal O_X(Y)\otimes \mathcal O_Y\cong \omega_Y\longrightarrow 0.
\]

As for part (ii),  let $\pi: X'\to X$ be the blow-up of $X$ at $x_1,\ldots,x_k$ with exceptional divisors $E_1,\ldots, E_k$. Set $E=\sum_{i=1}^ k E_i$. The  union $x$  of $x_1,\ldots,x_k$ is  a 0--dimensional subscheme of $X$. The proper transform $Y'$ of $Y$ in $X'$ is smooth and belongs to the linear system $|\pi^ *(\mathcal O_X(Y))\otimes \mathcal O_{X'}(-nE)|$, whereas $\omega_{X'}=\pi^ *(\omega_X)\otimes \mathcal O_{X'}((n-1)E)$. Hence, by (i), one has 
\[
\begin{split}
p_g(Y)=p_g(Y')&= h^ 0(X',\omega_{X'}\otimes \mathcal O_{X'}(Y'))+q(X')-p_g(X') \\
\, &=h^ 0(X',\pi^ *(\omega_{X}\otimes \mathcal O_{X}(Y))\otimes \mathcal O_{X'}(-E))+q(X)-p_g(X)\\
\, &=h^ 0(X,\omega_{X}\otimes \mathcal O_{X}(Y)\otimes \mathcal I_{x/X})+q(X)-p_g(X)\,.
\end{split}
\] 
Now the assertion follows.\end{proof}


\begin{lem}\label{lem:sec} Let $X\subset \mathbb P^ r$ be a non--degenerate, irreducible  projective variety of dimension $n$. Let $x_1,\ldots, x_k\in X$ be general points. 
If 
$k\leqslant r-n=\codim_{\PP^ r} (X)$,
then the scheme theoretical intersection of the linear space $\langle x_1,\ldots, x_k\rangle$ with $X$ is the reduced $0$--dimensional scheme consisting of $x_1,\ldots, x_k$. 
\end{lem}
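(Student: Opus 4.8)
The plan is to reduce the statement to the case of a curve and there invoke the classical general position theorem for irreducible, non-degenerate curves; this will yield the reducedness (transversality) for free as well.

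Set $n:=\dim X$ and $\Lambda:=\langle x_1,\ldots,x_k\rangle$, and assume first $n\ge2$ (the case $n=1$ being the curve case, treated last). I would begin by choosing a general linear subspace $L\subset\PP^r$ of dimension $r-n+1$ with $\Lambda\subseteq L$; since $\dim\Lambda=k-1\le r-n-1$, such subspaces form a positive-dimensional family. The key preliminary observation is that, via the irreducible incidence correspondence between $k$-tuples $(y_1,\dots,y_k)\in X^k$ and the linear $(r-n+1)$-spaces $M$ with $y_i\in M$ for all $i$ — which dominates $X^k$ with irreducible fibres, and dominates the Grassmannian with generic fibre $(X\cap M)^k$ — requiring $x_1,\ldots,x_k$ to be general in $X$ and $L$ general among the $(r-n+1)$-spaces through them is the same as requiring $L$ to be a general $(r-n+1)$-space and then $x_1,\ldots,x_k$ to be general points of $C:=X\cap L$. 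Iterated applications of Bertini's theorems in characteristic zero show that $C$ is an irreducible, reduced curve, and $C$ is non-degenerate in $L\cong\PP^{r-n+1}$ because a general hyperplane section of a non-degenerate irreducible variety of dimension $\ge2$ is non-degenerate. As $\Lambda\subseteq L$, we have $\Lambda\cap X=\Lambda\cap C$ scheme-theoretically, so it is enough to prove the lemma for the curve $C$, the general points $x_1,\ldots,x_k\in C$, and the bound $k\le r-n=\dim L-1$.

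So I am reduced to the following: $C\subset\PP^\rho$ is an irreducible, reduced, non-degenerate curve, $x_1,\ldots,x_k\in C$ are general, and $k\le\rho-1$. Non-degeneracy forces general points of $C$ to be linearly independent, so $\Lambda\cong\PP^{k-1}$. I would complete $x_1,\ldots,x_k$ to general points $x_1,\ldots,x_\rho\in C$ and put $H:=\langle x_1,\ldots,x_\rho\rangle$; since $C$ is non-degenerate, $H$ is a general hyperplane, so by the general position theorem $\Gamma:=C\cap H$ is a reduced set of $\deg C$ points in linearly general position in $H\cong\PP^{\rho-1}$, i.e.\ any $j\le\rho$ of them are linearly independent. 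Then, if some $z\in\Lambda\cap C$ were different from all the $x_i$, we would have $z\in\Lambda\subseteq H$, hence $z\in\Gamma$, and $x_1,\ldots,x_k,z$ would be $k+1\le\rho$ distinct points of $\Gamma$ spanning only the $(k-1)$-plane $\Lambda$ — contradicting linear general position. Hence $\Lambda\cap C=\{x_1,\ldots,x_k\}$ as sets; and since $\Lambda\subseteq H$ gives $\Lambda\cap C=\Lambda\cap\Gamma$, this is a subscheme of the reduced scheme $\Gamma$, hence reduced. This proves the curve case, and with it the lemma.

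The step I expect to be the main obstacle is the reduction itself: one must check carefully that a general codimension-$(n-1)$ linear section of $X$ through $x_1,\ldots,x_k$ is genuinely an irreducible, reduced, non-degenerate curve carrying the $x_i$ as general points. Generality of the $x_i$ on the section is handled by the incidence correspondence; irreducibility and reducedness by Bertini (characteristic zero); non-degeneracy by the hyperplane-section fact quoted above. Once one is on a curve, the argument is classical. An alternative would be an induction on $k$ using the projection of $X$ from $\langle x_1,\ldots,x_{k-1}\rangle$, but this forces one to prove separately that such an inner projection is birational onto its image, which in turn is most easily reduced to the curve case — so reducing to curves seems the cleanest route.
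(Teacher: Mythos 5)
Your proof is correct, and while the first half coincides with the paper's strategy, the second half takes a genuinely different route. The reduction to the one-dimensional case is the same idea as in the paper (there it is phrased as an induction on $n$ using general hyperplane sections through $\langle x_1,\ldots,x_k\rangle$); you do it in one step and, usefully, make explicit the incidence-correspondence argument showing that general points of $X$ together with a general $(r-n+1)$-plane through them are the same as general points of a general linear curve section --- a point the paper leaves implicit. Where you diverge is the curve case: the paper handles it by the classical trisecant lemma for $k=2$ followed by an induction on $k$ via inner projection from one of the points, whereas you invoke the General Position Theorem for the hyperplane $H$ spanned by $\rho$ general points of $C$. Your input is a stronger classical theorem, but it buys a cleaner argument: no induction on $k$, no need to check that the inner projections are birational and create no spurious secants, and reducedness of $\Lambda\cap C$ falls out immediately since it is a closed subscheme of the reduced scheme $C\cap H$. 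The one assertion you should justify is that $\rho$ general points of a non-degenerate curve $C\subset\PP^{\rho}$ span a \emph{general} hyperplane: this follows because the span map from the (non-empty, open) locus of linearly independent $\rho$-tuples in $C^{\rho}$ to the dual projective space has finite fibres (any fibre over $H$ sits inside $(C\cap H)^{\rho}$), hence is dominant, so the general fibre lies over a general hyperplane. With that line added, your argument is complete; note also that the General Position Theorem needs $\rho\geqslant 3$, which is automatic here since $k\geqslant 2$ forces $\rho=r-n+1\geqslant k+1\geqslant 3$.
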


\begin{proof} The assertion is trivial for $k=1$, so we assume $k\geqslant 2$. For $n=1$ and $k=2$,  this is the classical \emph{trisecant lemma}, to the effect that a general chord of a non--degenerate curve in $\mathbb P^ r$, where $r\geqslant 3$, is not a trisecant (see, e.g., \cite [Example 1.8]{CC} for a simple proof). If $n=1$ and $k>2$, one proceeds by applying induction on $k$ to the projection of $X$ to $\mathbb P^ {r-1}$ from one of the points $x_1,\ldots, x_k$. 

If $n>1$, one proceeds by applying induction on $n$ to the section of $X$ with a general hyperplane containing $\langle x_1,\ldots, x_k\rangle$. \end{proof} 

\subsection{The theorem}

\begin {thm}\label{thm:main-1} Let $X$ be an  irreducible, smooth, projective variety of dimension $n>1$, and let $L$ be a very ample line bundle on $X$. 
Then there is an integer $p_{X,L}$ (depending on $X$ and $L$) such that for all $p\geqslant p_{X,L}$ one can find  an irreducible hypersurface $Y\in \bigcup_{m \geqslant1}|mL|$ with at most ordinary points of multiplicity $n$ as singularities and with $p_g(Y)=p$. 
\end{thm}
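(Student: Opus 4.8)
The plan is to produce the required hypersurface $Y$ as a member of $|mL|$ for a suitable $m$, obtained by degenerating to a "generic" cone-like singularity configuration, and to control $p_g(Y)$ via Lemma \ref{lem:pg}(ii). First I would fix an integer $m_0$ large enough that $\omega_X\otimes\mathcal O_X((m-1)L)$ is very ample and that the general member of $|mL|$ is smooth and irreducible for all $m\geqslant m_0$; by adjunction the general such member $D$ is then smooth with $p_g(D)=h^0(X,\omega_X\otimes\mathcal O_X(mL))+q(X)-p_g(X)$, and by Serre vanishing $h^i(X,\omega_X\otimes\mathcal O_X(mL))=0$ for $i\geqslant1$, so Lemma \ref{lem:pg}(i) applies. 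Let me write $P(m)$ for this value; it is a strictly increasing function of $m$ (for $m$ large) and is in fact a polynomial of degree $n$ in $m$, so consecutive values $P(m+1)-P(m)$ grow without bound.

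The second step is to close the gaps between $P(m)$ and $P(m+1)$ by prescribing ordinary points of multiplicity $n$. Given $m\geqslant m_0$ and a number $0\leqslant k$ of points, I would choose general points $x_1,\ldots,x_k\in X$ and look for $Y\in|mL|$ with ordinary points of multiplicity $n$ at the $x_i$ and no other singularities. The existence of such a $Y$, with exactly ordinary singularities of the prescribed type, follows from a Bertini-type argument applied to the linear system of divisors in $|mL|$ singular to order $n$ at the $x_i$ — here one uses that $L$ is very ample, so that imposing an ordinary point of multiplicity $n$ at a general point is possible and the generic such divisor is smooth elsewhere and has the tangent cone of the correct (smooth degree-$n$ hypersurface) type; the points being general and $m$ large guarantees irreducibility. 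Once $Y$ has precisely these $k$ ordinary $n$-fold points, Lemma \ref{lem:pg}(ii) gives $p_g(Y)=P(m)-k$ provided the $k$ points impose independent conditions on $|\omega_X\otimes\mathcal O_X(mL)|$. Since $\omega_X\otimes\mathcal O_X(mL)=\omega_X\otimes\mathcal O_X(L)\otimes\mathcal O_X((m-1)L)$ is very ample for $m\geqslant m_0$, any set of $k\leqslant \dim|\omega_X\otimes\mathcal O_X(mL)|$ general points imposes independent conditions; in particular $k$ can range over $0,1,\ldots,N(m)$ where $N(m)=h^0(X,\omega_X\otimes\mathcal O_X(mL))$, which is certainly $\geqslant P(m+1)-P(m)$ once $m$ is large. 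Hence every value $p$ with $P(m)-N(m)\leqslant p\leqslant P(m)$ is realized.

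Finally I would assemble the pieces: as $m$ runs over $m\geqslant m_1$ (for a suitable $m_1\geqslant m_0$), the intervals $[\,P(m)-N(m),\,P(m)\,]$ cover $[P(m_1)-N(m_1),\infty)$, because $P(m+1)-P(m)\leqslant N(m)$ for $m$ large (both sides are polynomials in $m$ of degree $n$ and $n-1$ respectively — wait, more carefully, $P(m)$ has degree $n$ so $P(m+1)-P(m)$ has degree $n-1$, while $N(m)=h^0(X,\omega_X\otimes\mathcal O_X(mL))$ has degree $n$, so indeed the overlap condition holds eventually). Setting $p_{X,L}:=P(m_1)-N(m_1)$ then gives the theorem. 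The main obstacle I anticipate is the second step: rigorously establishing that a general member of the linear subsystem of $|mL|$ having multiplicity $\geqslant n$ at each of $k$ general points is irreducible, has ordinary (not worse) singularities exactly of multiplicity $n$ at those points, and is smooth elsewhere. This requires a careful Bertini argument on the blow-up $\pi:X'\to X$ at $x_1,\ldots,x_k$: the relevant system is $|\pi^*(mL)\otimes\mathcal O_{X'}(-nE)|$, and one must check it is base-point free away from the strict transforms of the exceptional behaviour and that its general member meets each $E_i$ in a smooth degree-$n$ hypersurface, which is where very ampleness of $L$ and the genericity of the $x_i$ (via Lemma \ref{lem:sec}) enter.
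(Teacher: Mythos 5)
Your overall skeleton coincides with the paper's: compute the genus $p_m$ of a smooth member of $|mL|$ via Riemann--Roch and Lemma \ref{lem:pg}(i), realize every $p$ between $p_{m-1}$ and $p_m$ as $p_m-k$ by imposing $k$ ordinary $n$-fold points at general points and invoking Lemma \ref{lem:pg}(ii), and observe that consecutive intervals overlap because $p_m-p_{m-1}$ grows like $m^{n-1}$ while the number of admissible points grows at least that fast. Your remark that general points automatically impose independent conditions on $|\omega_X\otimes\mathcal O_X(mL)|$ is correct and matches the paper.

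However, the step you yourself flag as ``the main obstacle'' --- the existence, for $k$ general points with $k$ up to roughly $p_m-p_{m-1}$, of an \emph{irreducible} $Y\in|mL|$ having \emph{ordinary} points of multiplicity exactly $n$ at those points and no other singularities --- is precisely the content of the paper's Claim 1, and it is not proved in your proposal; as written this is a genuine gap. Your suggested route, a Bertini analysis of the complete system of divisors in $|mL|$ with multiplicity $\geqslant n$ at the $x_i$, requires showing that evaluation of $H^0(X,\mathcal O_X(mL))$ onto $n$-th order jets at $k$ general points is surjective (so that the general member has multiplicity exactly $n$ with a general, hence smooth, degree-$n$ tangent cone at each point, and so that the system on the blow-up is base-point free off the exceptional divisors); this is a higher-order very-ampleness statement that does not follow from very ampleness of $L$ alone and needs a quantitative argument relating $m$ and $k$. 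The paper circumvents it by a different device: write $m=n\nu+\mu$ with $0\leqslant\mu\leqslant n-1$, set $\Lambda=\nu L\otimes\mathcal I_{x/X}$, and use Lemma \ref{lem:sec} (the generalized trisecant lemma, applicable because $\delta_m\leqslant\dim|\nu L|-n$ for $m\gg0$ by a Riemann--Roch count, since $\delta_m\sim m^{n-1}$ while $\dim|\nu L|\sim (m/n)^n$) to guarantee that the scheme-theoretic base locus of $|\Lambda|$ is the reduced scheme $x$; a general member of $|\Lambda^{\otimes n}\otimes\mathcal O_X(\mu L)|\subset|mL|$ then automatically acquires ordinary $n$-fold points at the $x_i$, its tangent cone there being a general product of $n$ linear forms drawn from the full cotangent space, and is irreducible and elsewhere smooth by Bertini. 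To complete your argument you must either supply this construction or prove the jet-surjectivity your version relies on.
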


\begin{proof} 
Set $ d:= L^ n$. For a positive integer $m$ we denote by $p_m$ the geometric genus of smooth elements in $|mL|$ (which is of course a non--gap). We  show that for $m$ sufficiently large, any integer $p$ in the interval $[p_{m-1}+1, p_m-1]$ is the geometric genus of a hypersurface in $|mL|$ with $p_m-p$ ordinary points of multiplicity $n$ as singularities, which can be taken generically on $X$.

Since $L$ is very ample, by Lemma \ref {lem:pg}--(i) and by the asymptotic Riemann--Roch Theorem \cite [Vol.\;I, p. 21]{L}, we have 
\begin{eqnarray}\label{eq:pd}
p_m & = & \chi(\omega_{X}\otimes \mathcal O_{X}(mL))+q(X)-p_g(X)\\
     & = & h^0(\omega_{X}\otimes \mathcal O_{X}(mL))+q(X)-p_g(X)\nonumber\\
     & = & \frac {m^ n}{n!} d+ O(m^ {n-1})\,. \nonumber
\end{eqnarray}
Hence
\begin{equation}\label{eq:delta}
\delta_m:=p_m-p_{m-1}-1=\frac  {m^ {n-1}}{(n-1)!} d + O(m^ {n-2})\,.
\end{equation}
Theorem \ref{thm:main-1} follows from the: 

\medskip

\noindent {\bf Claim 1}. \emph{There is an integer $m_{X,L}$ (depending on $X$ and $L$) such that for all $m\geqslant m_{X,L}$, for all positive integers $k\leqslant \delta_m$, and for general points $x_1,\ldots x_k$ 
in $X$, one can find an irreducible element $Y\in |mL|$ with ordinary points of  multiplicity $n$ at 
$x_1,\ldots, x_k$ and no other singularity.} 

\medskip

Indeed, suppose that Claim 1 holds. Then the map \eqref{eq:pp} is surjective by the generality of  $x_1,\ldots, x_k$. Thus Lemma \ref {lem:pg}--(ii) implies Theorem \ref{thm:main-1} with 
\begin{equation*}\label{pX}
p_{X,L}:=p_{m_{X,L}-1}. 
\end{equation*}

In turn, Claim 1 is  a consequence of the following
\medskip

\noindent {\bf Claim 2}. \emph{There is an integer $m_{X,L}\geqslant n$ such that for all $m\geqslant m_{X,L}$, one has}
\begin{equation}\label{eq:r1} 
\delta_m \leqslant\dim(|\nu L|)-n\,,
\quad\mbox{where}\quad m=n\nu + \mu\quad \mbox {with}\quad \mu \in \{0, \ldots, n-1\}\, .
\end{equation}

Indeed, assuming that Claim 2 holds, let $x$ be the reduced $0$--dimensional scheme formed by the points $x_1,\ldots, x_k$, and let $\Lambda: =\nu L\otimes  \mathcal I_{x/X}$. 
By Lemma  \ref {lem:sec}, \eqref{eq:r1} ensures that $x$ is the base locus scheme of the linear system $|\Lambda|$. Therefore, by Bertini's theorem the general  $Y\in |\Lambda^{\otimes n}\otimes \mathcal O_X(\mu)|\subset |mL|$  is irreducible having $x_1,\ldots, x_k$ as ordinary points of multiplicity $n$ and no other singularity.
Thus,  Claim 2 implies Claim 1.
\medskip

Finally, we prove Claim 2.

\begin{proof}[Proof of Claim 2]
By the asymptotic Riemann--Roch Theorem (cf.\;\eqref{eq:pd}),  one has 
\[
\dim(|\nu L|)=\frac {\nu^ n}{n!}d + O(\nu^ {n-1})\,.
\] Hence, by \eqref {eq:delta}, Claim 2 holds if, for $m \gg 0$, one has
\begin{equation}\label{eq:nu}
n\,m^ {n-1}<\nu^ n\,.
\end{equation}
Since $\frac{m}{n}<\nu+1$, \eqref {eq:nu} 
is  true for $m\gg 0$. 
\end{proof}

This ends the proof of Theorem \ref{thm:main-1}.  
\end{proof}

\begin{rem}\label{rem:asymreal}  As follows from the proof, the upper bound $p_{X,L}$ depends only on the Hilbert polynomial of $\bigoplus_{m\geqslant1} H^0(X, \omega_X \otimes \cO_X(mL))$ and of $\bigoplus_{m\geqslant1} H^0(X,\cO_X(mL))$. The former coincides with the Hilbert function by Kodaira's Theorem.
Assuming that $h^i(X,\cO_X(mL))=0$ for all positive integers $m$ and $i$, it is possible to replace the asymptotic Riemann--Roch theorem with the true Riemann--Roch, which is then purely numerical. This gives in principle an effective bound on the integers $m_{X,L}$ and $p_{X,L}$ in Theorem \ref {thm:main-1} (cf.\; Section \ref {S:surf} for a particular case).
\end{rem}

\noindent \emph{Proof of Theorem \ref{thm:main}.}
With $X$, $L$, $n$, and $s$ as in Theorem \ref{thm:main}, it
suffices to apply Theorem \ref{thm:main-1} to $X'=D_1\cap\ldots\cap D_{n-s-1}$ instead of $X$ and $L|_{X'}$ instead of $L$, where $D_1,\ldots, D_{n-s-1}\in |L|$ are general. 
\qed


\section{
Genera of curves on smooth surfaces}\label{S:surf} In this section we compute  an effective upper bound for gaps of geometric genera of curves on surfaces. 

\bthm\label{thm:upper-bound} Let $S$ be a  smooth, irreducible, projective surface, and $L$ a very ample line bundle on $S$. Set 
\begin{equation*}\label{eq: notation} p:=p_g(S),\quad q:=q(S),\quad d:=L^2,\quad\mbox{and}\quad e:=K_S\cdot L\,.\end{equation*} For  $\epsilon \in \{0,1\}$, set
\begin{equation}\label{eq:Deltae}
\Delta(\epsilon):= 4 (3 + 2 \epsilon) d^2 + 12 de + e^2 - 8 d (p-q),
\end{equation} 
\be\label{eq:na} n_1 =n_1(\epsilon):=
\begin{cases} \, \, 2 &  \,\,\,\mbox{if}\,\,\,\Delta(\epsilon) < 0,\\ 
\left\lceil 4 + \epsilon + \frac{e}{d} + \sqrt{\frac{\Delta(\epsilon)}{d^2}} \right\rceil&  \,\,\,\mbox{if\,} \,\,\,\Delta(\epsilon) \geqslant 0,\end{cases}
\ee
\be\label{eq:nb} n_2 =n_2(\epsilon):= 
\left\lceil \frac{6(p-q) + d (1+\epsilon) + e (2 \epsilon -1)-12}{e + 2d(1+\epsilon)}  \right\rceil\,,
\ee
\be\label{eq:nbb} n_3 := \min\left\{n\in \mathbb N\,| \left\lfloor \frac n2\right\rfloor^ 2d>nd-\frac {d-e}2-1\right\}\,,
\ee
\be\label{eq:vanishing} 
n_4:=\min\left\{n\in \mathbb N\,\vert\, h^1(S,\cO_S(nL))=h^2(S,\cO_S(nL))=0\right\}\, ,
\ee 
and
 \be\label{eq:n0} n_0 =n_0(\epsilon):= {\rm max} \{n_1(\epsilon), n_2(\epsilon),n_3, n_4\} \,.
\ee
Set finally 
\be\label{eq:phi}
\varphi(d,e,n_0)= \frac{1}{2} \left[(n_0-1)((n_0-1)d + e)\right] + 1\,.\ee 
Then for any $g \geqslant \varphi(d,e, n_0)$ the surface $S$ carries
a reduced, irreducible curve $C$ of geometric genus $g$ with only nodes as singularities. 
\ethm

The proof of Theorem \ref{thm:upper-bound} is basically the same as the one of Theorem \ref {thm:main-1} in the case of surfaces, with a slight improvement, based upon the following:

\bthm\label{thm:Ter-lemma} {\rm (\cite[Thm.\ 1.4]{CC},
\cite[Thm.\ 1.3]{CR})} Let $X \subset \PP^r$ be an irreducible,
projective,  non--degenerate variety of dimension $m$.
Assume $X$ is not $k$--weakly defective for a given $k \geqslant 0$ such that
\be\label{eq:ter-lem} 
r \geqslant(m+1)(k+1)\,.
\ee Then, given general points $p_0,\ldots,p_k$ on $X$, the general hyperplane $H$
containing $T_{X,p_0,\ldots,p_k}$\footnote{$T_{X,p_0,\ldots,p_k}$ stands for the linear span of the union of the embedded  tangent spaces $T_{X,p_i}$, $i=0,\ldots,k$.}  is
tangent to $X$ only at $p_0,\ldots,p_k$. Such a hyperplane $H$
cuts out on $X$ a divisor with ordinary double points at
$p_0,\ldots,p_k$ and no further singularities.
\ethm

Recall (see \cite[p.\,152]{CC1}) that a variety $X$ as in Theorem \ref{thm:Ter-lemma} is
said to be \emph{$k$-weakly defective} if, given $p_0,\ldots,p_k\in X$ general points and 
a general hyperplane $H$ containing $T_{X,p_0,\ldots,p_k}$ (i.e., \emph{tangent} to $X$ at $p_0,\ldots,p_k$), 
then $H$ cuts out on $X$ a divisor $H_X$ such that there is a positive dimensional subvariety 
$\Sigma \subseteq {\rm Sing}(H_X)$ containing  $p_0,\ldots,p_k$
($\Sigma$ is then called the {\em contact variety of} $H$).


\subsection{Proof of Theorem \ref{thm:upper-bound}}\label{ss:nota} 
The arithmetic genus of curves in $|nL|$ is
\begin{equation}\label{eq:pa}
p(d,e,n):=\frac{1}{2}n(nd+e)+1\,.
\end{equation} 
For $n\geqslant n_0$ set
\be\label{eq:l} 
l(d,e,n):=\dim(|nL|) =\frac{1}{2}n(nd-e)+p-q\,,\ee 
where the latter equality follows by the Riemann--Roch Theorem 
and \eqref {eq:vanishing}, since we assume $n\geqslant n_0\geqslant n_4$. Consider the
 embedding $$\phi_{|nL|}\colon S\hookrightarrow \PP^{l(d,e,n)}\,.$$ Since  $\phi_{|nL|}$ is an isomorphism of $S$ to its image $S_n$, we may identify $S$ with 
$S_n$. 

Set
\be\label{eq:delta2}
\delta(d,e,n):=p(d,e,n)-p(d,e,n-1)-1=nd-\frac{1}{2}(d-e)-1\,.
\ee 
As in the proof of Theorem \ref {thm:main-1}, we show that  for any $n \geqslant n_0$ and any positive integer  $k\leqslant\delta(d,e,n)-1$, one can find an irreducible curve $C\in
|nL|$ with exactly $k+1$ nodes at general points of $S$ as its only singularities. Then, for any $n \geqslant n_0$, all the integers in the interval
$J_n=[p(d,e,n-1),\,p(d,e,n)]$ are non-gaps.
Since the intervals $J_n$ and $J_{n+1}$ overlap, this proves
Theorem \ref{thm:upper-bound}, because
\begin{equation}\label{eq:phi2}
\varphi(d,e, n_0):=\min (J_{n_0})=p(d,e,n_0-1)\,
\end{equation} is exactly \eqref{eq:phi}.

The proof follows by Proposition \ref{prop:proof-thm} and Lemma \ref{lem:inequality} below (which are of independent interest).

\bprop\label{prop:proof-thm}  Let $S$ be a  smooth, irreducible, projective surface, and $L$ a very ample line bundle on $S$.  Assume that $n \geqslant \max \{n_3,2\}$  and that (with the above notation) the following inequalities hold
\begin{equation}\label{eq: bound} l(d,e,n)\geqslant3(\delta(d,e,n)-1)
\,\end{equation} and
\begin{equation}\label{eq: bound-1} l(d,e,\lfloor n/2\rfloor)\geqslant\delta(d,e,n)+1\,.\end{equation} 
Then for any
$k\in\{0,\ldots,\delta(d,e,n)-1\}$,\\ \begin{inparaenum}
\item[(a)] the smooth surface $S_n\subset \PP^{l(d,e,n)}$ is not $k$-weakly defective, and\\
\item[(b)]  there exists a reduced, irreducible curve $C\in |nL|$
in $S$ with nodes at $k+1$ general points of $S$ and no other singularity. 
\end{inparaenum}
 \eprop

\bproof Let $x_0, \ldots, x_k$ be general points of $S$. 
Inequality (\ref{eq: bound-1}) guarantees that, for any $k\in\{0,\ldots,\delta(d,e,n)-1\}$, one has
$$\dim\,|\cO_S(\lfloor n/2\rfloor L)\otimes \mathcal I_{\{x_0,\ldots, x_k\}/S}|=l(d,e,\lfloor n/2\rfloor)-k-1\geqslant 
l(d,e,\lfloor n/2\rfloor)-\delta(d,e,n)
\geqslant1\,.$$ 
The general curve in $|\cO_S(\lfloor n/2\rfloor L)\otimes \mathcal I_{\{x_0,\ldots, x_k\}/S}|$ 
is reduced and irreducible. Letting $C_1$ and $C_2$ be two different such general curves, and $C_0$ a general member of $L$, we obtain a divisor 
$$C=\varepsilon C_0+C_1+C_2\in |\cO_S(nL)\otimes \mathcal I_{T_{S,x_0,\ldots, x_k}/\mathbb P^ {l(d,e,n)}}|\,,$$ where $\varepsilon\in\{0,1\}$, $\varepsilon\equiv n\mod 2$. 
Since $C$ is reduced, with nodes at  $x_0,\ldots,x_k$, this shows that (a) holds. 

Now (b) follows. Indeed, since $k+1\leqslant \delta(d,e,n)$,
(\ref{eq: bound}) yields (\ref{eq:ter-lem}) with $m=2$ and $r=l(d,e,n)$. Hence Theorem \ref{thm:Ter-lemma} applies, and so, the general curve in $|\cO_S(nL)\otimes \mathcal I_{T_{S,x_0,\ldots, x_k}/\mathbb P^ {l(d,e,n)}}|$ has nodes at $x_0,\ldots, x_k$ and is elsewhere smooth.  This curve is irreducible by Bertini's theorem. Indeed, if $n$ is odd, then $|\cO_S(nL)\otimes \mathcal I_{T_{S,x_0,\ldots, x_k}/\mathbb P^ {l(d,e,n)}}|$ has no fixed component and is not composed with a pencil. Assume that $n$ is even. By \eqref {eq:nbb}, 
$$C_1\cdot C_2=   \frac {n^ 2}4 d>\delta(d,e,n)\geqslant k+1\,,$$
which motivates \eqref {eq:nbb}.
So, the general curve in $|\cO_S(nL)\otimes \mathcal I_{T_{S,x_0,\ldots, x_k}/\mathbb P^ {l(d,e,n)}}|$, being singular only at $x_0,\ldots, x_k$, cannot be of the form $C_1+C_2$, hence it must be irreducible. \eproof

\blem\label{lem:inequality} Let $\epsilon \in \{0,1\}$ be such that $\epsilon \equiv n \pmod{2}$, and let $\Delta(\epsilon)$ be as in \eqref{eq:Deltae}. Then\\ 
\begin{inparaenum}\item[(a)] {\rm (\ref{eq: bound-1})} holds for any
$n\geqslant n_1$, with $n_1$ as in {\rm \eqref{eq:na}};  \\
\item[(b)] if  {\rm (\ref{eq: bound-1})} holds, then also {\rm (\ref{eq: bound})} holds,  provided
that $n \geqslant n_2$, with $n_2$ as in {\rm \eqref{eq:nb}}. 
\end{inparaenum}\elem

\bproof (a) Write $n = 2 t + \epsilon$, with $t \geqslant 1$ since $n \geqslant n_1\geqslant 2$. From \eqref{eq:l} and \eqref{eq:delta2}, 
\eqref{eq: bound-1} reads 
\begin{equation}\label{eq:aiuto1}
t^2 d - t(4d+e) + 2(p- q) - e + (1-2\epsilon) d\geqslant 0\,.
\end{equation}and the discriminat of the left hand side is $\Delta(\epsilon)$ as in \eqref{eq:Deltae}. 

When $\Delta(\epsilon) \geqslant 0$,  \eqref{eq:aiuto1} holds for $t \geqslant \frac{4d+e + \sqrt{\Delta(\epsilon)}}{2d}$, and so (\ref{eq: bound-1}) holds for 
{\small 
$$n \geqslant 4 + \epsilon + \frac{e}{d} +\sqrt{\frac{\Delta(\epsilon)}{d^2}}\,.$$
}If $\Delta(\epsilon) < 0$, then (\ref{eq: bound-1}) holds for any $n \geqslant 2$. This motivates the definition of $n_1$ in \eqref{eq:na} and proves (a).

\smallskip 

\noindent
(b) As above, (\ref{eq: bound}) reads 
\begin{equation}\label{eq:aiuto2}
n^2 d - n(6d+e) + 2 (p - q) + 3(d - e) + 12 \geqslant 0\,.
\end{equation} Moreover, \eqref{eq:aiuto1} reads  
\begin{equation}\label{eq:aiuto1b}
n^2 d - 8nd - 2ne + 8(p- q) + 4(d -  e) + \epsilon (\epsilon d - 2 nd + 2 e) \geqslant 0\,.
\end{equation}  The difference between the 
left hand side in \eqref{eq:aiuto2} and that of  \eqref{eq:aiuto1b} is 
$$2nd + ne - 6(p - q) - (d - e) - \epsilon (\epsilon d - 2 nd + 2 e)+12\,,$$ which is non--negative as soon as 
$$n \geqslant \frac{6(p-q) + d (1+\epsilon) + e (2 \epsilon -1)-12}{e + 2d(1+\epsilon)}\,.$$Assuming (a), this motivates the definition of 
$n_2$ in \eqref{eq:nb} and   proves (b). 
\eproof

\begin{proof}[Proof of Theorem \ref{thm:upper-bound}] The integer $n_0$ in \eqref{eq:n0} satisfies both (a) and (b) in Lemma \ref{lem:inequality}. Hence \eqref{eq: bound}  and \eqref{eq: bound-1} hold, and we can conclude by Proposition \ref{prop:proof-thm}.
\end{proof}


\section{Genera of curves on smooth surfaces in $\PP^3$}\label{S:surfP3} Here we focus on the case $S$ is a smooth surface of degree 
$d \geqslant 4$ in  $\PP^3$. In \cite{CFZ} we considered the case of a very general $S\in |\mathcal O_{\PP^ 3}(d)|$; here we drop this assumption, and simply assume $S$ smooth and $d\geqslant 4$ (the case $d<4$ being trivial for our considerations, because then $S$ carries curves of any genus). As a direct consequence of Theorem \ref{thm:upper-bound}, we have:

\bcor\label{cor: upper-bound}  For any integer $d \geqslant 4$ there exists an integer $c_d$ such that, for any smooth surface $S$ in  $\PP^3$ of degree $d$
and any integer $g \geqslant c_d$, $S$ carries
a reduced, irreducible nodal curve of geometric genus $g$, whose nodes can be prescribed generically on $S$. 
\ecor

One can give an effective upper bound for $c_d$. We keep here the notation of Section \ref {S:surf}. Letting $L=\mathcal O_S(1)$ 
we obtain $$e = d (d-4),\quad q=q(S)= 0, \;\;\; {\rm and} \;\;\; p =p_g(S)=  \frac{1}{6}(d-1)(d-2)(d-3)\,.$$ By Theorem \ref{thm:upper-bound}
one has
\begin{equation}\label{eq:phi-1}
c_d \leqslant \varphi (d, d(d-4), n_0)\,,
\end{equation}  cf.\ (\ref{eq:phi}).
Thus,  we are left to compute $n_0$ as in \eqref{eq:n0}. Since, by Serre duality, $n_4=d-3$, this amounts to compute $n_1$, $n_2$, and $n_3$ as in \eqref {eq:na}, \eqref {eq:nb}, and \eqref {eq:nbb}.  

From \eqref{eq:Deltae} 
we get $$\Delta({\epsilon}) = d \left( - \frac{1}{3} d^3 + 12 d^2 - \frac{1}{3} (104 - 24 \epsilon) d + 8\right)\,.$$ 
The polynomial $\Delta({\epsilon})/d$ has three positive roots 
$$d_1,d_2,d_3 \sim \begin{cases} 0,25, \;2, 89, \; 32, 86 &\quad\mbox{if}\quad \epsilon=0,\\ 0,36, \; 2, \;\;\;\;\;\; 33,64&\quad\mbox{if}\quad \epsilon=1\,.\end{cases}$$
Thus, 
$\Delta(0) \geqslant 0$ for $4 \leqslant d \leqslant 32$  and $\Delta(0) \leqslant 0$ for $d \geqslant 33$, while $\Delta(1) \geqslant 0$ for $4 \leqslant d \leqslant 33$  and $\Delta(1) \leqslant 0$ for $d \geqslant 34$.

Now
\eqref{eq:na}, \eqref{eq:nb}, and (\ref{eq:nbb}) give,  respectively,  
$$n_1(0)= 
\begin{cases} 2 &  \,\,\,\mbox{if}\,\,\, d \geqslant 33\,,\\ 
\left\lceil d + \sqrt{\frac{\Delta(0)}{d^2}}  \right\rceil  &  \,\,\,\mbox{if}\; 4 \leqslant d \leqslant 32\,,\end{cases}\quad\qquad n_1(1) = 
\begin{cases} 2 &  \,\,\,\mbox{if}\,\,\, d \geqslant 34\,,\\ 
\left\lceil d + 1 + \sqrt{\frac{\Delta(1)}{d^2}}  \right\rceil &  \,\,\,\mbox{if}\; 4 \leqslant d \leqslant 33\,,\end{cases}$$

$$n_2(0)= 
\left\lceil d-5 + \frac{6 (d-3)}{d(d-2)} \right\rceil,\quad\qquad n_2(1)  = 
\left\lceil d-5 + \frac{9(d-2)}{d^2} \right\rceil\,,$$ 

\noindent and

\[
n_3(0)=  3 + \left \lfloor \sqrt {2d-6-(4/d)}  \right \rfloor, \quad\qquad n_3(1)= 4 + \left \lfloor\sqrt {2d-2-(4/d)} \right \rfloor\,.
\]

\noindent In particular, for $d\gg 0$, one has $$n_1=2,\quad n_2\sim d-5, \quad n_3\sim\sqrt{d}, \quad\mbox{hence}\quad n_0=n_4=d-4\,.$$ So, by  (\ref{eq:phi}) and (\ref{eq:phi-1}), 
\[
c_d \leqslant \varphi (d, d(d-4), d-4) = \frac{d(d-5) (2d -9)}{2}\sim d^ 3\,.
\]

\begin{rem}\label{rem:compareflam2}\normalfont{ Let ${\rm Gaps}(d)$ be the set of gaps for geometric genera of irreducible curves on $S\in |\mathcal O_{ \PP^3}(d)|$ very general. By \cite[Theorem 2.4] {CFZ}, one has $${\rm Gaps}(4) = \emptyset, \;\; {\rm Gaps}(5) = \{0,1,2\},\quad\mbox{and}\quad {\rm Gaps}(d) \subset \left[0, \; \frac{d(d-1)(5d-19)}{6} -1\right]\;\; \mbox{for}\;\; d \geqslant 6\,.$$
 This is compatible with the results of the present section. A more refined analysis based on \cite[Remark 2.5] {CFZ}, shows that the maximum $G_d$ of ${\rm Gaps}(d)$ goes 
like $G_d=O(d^ {\frac 83})$. It is an open problem to see if this is sharp.
}
\end{rem}


\subsection{Absence of absolute gaps for curves on smooth surfaces in $\PP^3$} We say that an integer $g$ is a \emph{$d$--absolute gap} if there is no irreducible curve with geometric genus $g$ on any smooth surface of degree $d$. We show here that there is no absolute gap at all. 

\begin{thm}\label{thm:abs-gaps} For any positive integer $d$ and for any non-negative integer $g$, there is a smooth surface $S\subset \PP^ 3$ of degree $d$ and an irreducible, nodal curve $C$ on $S$ with geometric genus $g$.\end{thm}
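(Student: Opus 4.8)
The plan is to reduce the statement to Corollary \ref{cor: upper-bound} together with an elementary, hands-on construction taking care of the finitely many small genera that Corollary \ref{cor: upper-bound} does not reach. More precisely, fix $d\geqslant 1$. If $d\leqslant 3$ the statement is trivial, since a smooth surface of degree $\leqslant 3$ in $\PP^3$ is rational (or a plane) and carries irreducible nodal curves of every geometric genus; so assume $d\geqslant 4$. By Corollary \ref{cor: upper-bound} there is an integer $c_d$ such that every smooth surface of degree $d$ carries an irreducible nodal curve of geometric genus $g$ for all $g\geqslant c_d$. Hence it remains to realize each $g\in\{0,1,\ldots,c_d-1\}$ as the geometric genus of an irreducible nodal curve on \emph{some} smooth surface of degree $d$ — note that here, unlike in Corollary \ref{cor: upper-bound}, we are free to choose the surface depending on $g$.

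The key step is therefore the following construction. I would start from a curve $C_0$ of the desired geometric genus $g$ that lies on a convenient smooth surface of \emph{low} degree $d_0\leqslant d$ (for instance a smooth quadric or cubic, or a plane), where such curves of arbitrary genus are abundant and well understood; concretely one can take a smooth curve $C_0$ of bidegree $(a,b)$ on a smooth quadric, with $g=(a-1)(b-1)$, or a plane curve, adjusting $a,b$ (or using nodal plane curves, whose geometric genus $\binom{m-1}{2}-\delta$ takes every value) to hit $g$ exactly. Then I would exhibit $C_0$ as a curve on a smooth surface $S$ of degree exactly $d$: write $S$ in the form $S = \{Q\cdot A + R = 0\}$ where $Q$ is the (low-degree) surface containing $C_0$, $A$ is a general form of complementary degree, and $R$ lies in the ideal of $C_0$ of degree $d$; a general such $S$ contains $C_0$, and by a Bertini-type argument $S$ is smooth away from $C_0$, while along $C_0$ one checks smoothness by a local computation choosing $A$ and $R$ generically. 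Alternatively, and perhaps more cleanly, take a general surface $S$ of degree $d$ in the linkage pencil spanned by $Q\cdot(\text{general form})$ and a general degree-$d$ element of $I_{C_0}$, and invoke standard results on smoothness of a general member of a linear system with prescribed base locus a smooth curve.

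Once $C_0\subset S$ with $S$ smooth of degree $d$ is arranged, the curve $C_0$ is already smooth, so its geometric genus equals $g$ and it is in particular nodal (with zero nodes). If the statement is to be read as requiring a strictly positive number of nodes, or to make the construction uniform, one can instead take a general curve in the linear system $|C_0 + (\text{a few general hyperplane sections})|$ on $S$, or a general deformation of $C_0 \cup (\text{lines/conics in } S)$, and use the fact that on a surface the general member of a base-point-free (away from finitely many points) linear system of curves sprouting nodes from the imposed incidence has exactly the predicted nodes and geometric genus; adjusting the number of general sections added changes the geometric genus in a controlled way, so one again covers all small $g$. Either way, combining this with Corollary \ref{cor: upper-bound} gives, for every $g\geqslant 0$, a smooth $S\subset\PP^3$ of degree $d$ and an irreducible nodal curve of geometric genus $g$ on it.

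The main obstacle I anticipate is not the large-genus range, which is handled outright by Corollary \ref{cor: upper-bound}, but ensuring that the surface $S$ of degree exactly $d$ can be taken \emph{smooth} while still containing the chosen low-genus curve $C_0$: one must control the singularities of $S$ along $C_0$, which is a local question at the points of $C_0$ and requires choosing the building-block forms $A$, $R$ (or the linkage data) with enough generality — in particular checking that $I_{C_0}$ contains enough degree-$d$ forms to make a general member smooth along $C_0$, which for a smooth curve $C_0$ in $\PP^3$ and $d$ not too small follows from Castelnuovo–Mumford regularity / global generation of $I_{C_0}(d)$. Keeping track of when $d$ is large enough relative to the (bounded, $g$-dependent) degree of $C_0$, and handling the genuinely small $d$ separately, is the part that needs care; everything else is routine Bertini and adjunction.
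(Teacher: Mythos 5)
Your reduction has a genuine quantitative gap: the ``small genus'' range left over after Corollary \ref{cor: upper-bound} is $0\leqslant g<c_d$ with $c_d\sim d^3$ (the paper's bound is $c_d\leqslant \frac{d(d-5)(2d-9)}{2}$), and your construction via curves on planes, quadrics or cubics can only reach genera of order $d^2$. Indeed, if $C_0$ lies on a smooth quadric $Q$ and also on a smooth (hence irreducible) surface $S$ of degree $d$, then $S\cap Q$ is a curve of bidegree $(d,d)$ containing $C_0$, so the bidegree $(a,b)$ of $C_0$ satisfies $a,b\leqslant d$ and $g=(a-1)(b-1)\leqslant (d-1)^2$; similarly a plane curve on $S$ has degree $\leqslant d$ and genus $\leqslant \binom{d-1}{2}$, and a curve on a cubic gives at most $O(d^2)$. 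So no choice of the forms $A,R$, linkage data, or regularity bound can make your step work for $g$ between roughly $d^2$ and $d^3$: the obstruction is Bezout, not smoothness of $S$ along $C_0$ (that part of your argument is fine and is essentially the same $\Sigma+\Phi$ trick the paper uses). Your fallback of taking general members of $|C_0+kH|$ on $S$ does not repair this, since adjunction makes the genus jump in large increments and hitting every intermediate value is exactly the nodal-curve existence problem on $S$ that one is trying to avoid.

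The paper fills precisely this intermediate range by letting the auxiliary surface degree $n$ run through $4\leqslant n\leqslant d-2$: it invokes the Chiantini--Ciliberto theorem on Severi varieties (\cite[Thm.~3.1]{CC0}), which for a \emph{general} surface $\Sigma$ of degree $n$ produces irreducible curves in $|\mathcal O_\Sigma(d)|$ with any number $\delta\leqslant \ell_{n,d}$ of nodes, hence any geometric genus in $[\,p_{n,d}-\ell_{n,d},\,p_{n,d}\,]$; it checks that consecutive such intervals overlap, so their union covers $[0,\infty)$ together with the very-general-surface bound from \cite{CFZ}; and it then shows the degree-$d$ surface cutting out the curve on $\Sigma$ can be taken smooth (Bertini off $C$, and smooth at the nodes via $\Sigma+\Phi$). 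If you want to salvage your approach you would need a substitute for that Severi-type existence statement on surfaces of degree growing with $d$; quadrics and cubics alone cannot do it.
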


\begin{proof} We may assume $d \geqslant 5$, otherwise the result is well known (cf. e.g. \cite[Prop.\;1.2 and Cor.\;2.2]{CFZ}). 

We set 
\be\label{4000} \ell_{d,n} := l (d, d(d-4), n) = 
\Bigg \{  \begin{array}{ccc}
& \frac {n (n^ 2+6n+11)}6 \,\,\, &\text {if}\,\,\, n<d \\
& \frac {d\big ( 3n^ 2-3n (d-4)+(d^2-6d+11)\big)}{6} -1
 \,\,\, &\text {if}\,\,\, n\geqslant d\,, \\
\end{array}
\ee and 
\be\label{4000b} p_{d,n}:= p(d,d(d-4),n) =\frac {dn(d+n-4)}2+1\,.\ee 
By \cite[Thm.\;2.4 and Rem.\;2.5] {CFZ}, for $S \subset \PP^3$ very general one has
$${\rm Gaps}(d) \subset \left[0,\; p_{d,n-1}-\ell_{d,n-1}-1 \right]= \left[0,\;
\frac{d(d-1)(5d-19)}{6} -1\right]\,\,\, \text{if}\,\,\, d>n\geqslant
\sqrt[3]{12d^ 2}\,.$$Plugging $n=d-1$ in this formula, we obtain the desired result 
for all 
\[
g\geqslant p_{d,d-2}-\ell_{d,d-2}\,.
\]Take now $n\leqslant d-2$. By  \cite [Theorem 3.1]{CC0}, for a general surface $\Sigma\subset \PP^ 3$  of degree $n$
with $4\leqslant n\leqslant d-2$ and for any $g\in [p_{n,d}-
\ell_{n,d}, p_{n,d}]$ there is  a reduced, irreducible component
$\mathcal V$ of  the Severi variety of complete intersections of $\Sigma$ with surfaces of degree $d$ having 
$\delta= p_{n,d}-g$ nodes as the only singularities. 

Notice that the union of integers in the non--gap intervals 
$$J_{d-1} (n) = [p_{n,d-1} - \ell_{n,d-1}, p_{n,d-1}] \;\; {\rm and} \;\;  J_{d} (n) = [p_{n,d} - \ell_{n,d}, p_{n,d}]$$ is an integer interval 
for $n \leqslant d-2$. To see this, it suffices to observe that 
$$p_{n,d} - \ell_{n,d} \leqslant  p_{n,d-1} + 1 < p_{n,d}\,.$$
The inequality on the right  is trivial. To show the other inequality
$$\ell_{n,d} \geqslant p_{n,d} - p_{n, d-1} -1\,,$$ using \eqref{4000}, \eqref{4000b}, and the fact that $ d \geqslant n+2 > n$, we can rewrite it as 
$$3 d (d - n +2) + (n^2 - 9 n + 26) \geqslant 0\,,$$which holds if $d \geqslant n+2$.

Any  curve $C\in \mathcal V$ is cut out on $\Sigma$ by a surface $S$ of degree $d$. We claim that $S$ can be taken to be smooth. Since the linear system $|\mathcal I_{C/\PP^3}(d)|$ is base point free outside $C$, by Bertini's theorem $S$ can be choosen to be smooth off $C$. Suppose $S$ is singular at a point $p\in C$. Since $C$ has $\delta$ nodes and no other singularity, and it is the complete intersection of $S$ and $\Sigma$, then $p$ is a node of $C$. But the general surface in 
$|\mathcal I_{C/\PP^3}(d)|$ is non--singular at the nodes of $C$, because $|\mathcal I_{C/\PP^3}(d)|$ contains all surfaces of the form $\Sigma+\Phi$, where $\Phi$ is a general surface of degree $d-n$ (so it does not contain the nodes of $C$), and $\Sigma$ is smooth (thus  $\Sigma+\Phi$ is smooth at the nodes of $C$). 

In this way we find nodal curves of any geometric genus 
$g\geqslant p_{4,d}-\ell_{4,d}=0$ on smooth surfaces of degree $d$, proving the assertion. 
\end{proof}

Let us conclude by the following conjecture. 

\medskip

\noindent {\bf Conjecture.} {\em For any smooth, rational variety $X$
of dimension $n+1$, any very ample line bundle $L$ on $X$, any $s\in\{1,\ldots, n-1\}$, and any integer $g\geqslant 0$  there is a smooth hypersurface $D\in|\cO_X(L)|$ carrying an $s$-dimensional subvariety $S\subset D$ of geometric genus $g$. 

In particular, for any $n \geqslant 3$, $d \geqslant 1$, $s\in\{1,\ldots, n-1\}$, and $g\geqslant 0$ there is a smooth hypersurface $D\in |O_{\PP^{n+1}}(d)|$
and a subvariety $S\subset D$ as before. }

\smallskip

One can ask whether the same holds, more generally,  for any smooth Fano  variety  $X$. 

\providecommand{\bysame}{\leavevmode\hboxto3em{\hrulefill}\thinspace}

\end{document}